\DeclareMathAlphabet{\mathpzc}{OT1}{pzc}{m}{it}
\newtheorem{defi}{Definition}[section]
\newtheorem{theo}[defi]{Theorem}
\newtheorem{prop}[defi]{Proposition}
\newtheorem{exam}[defi]{Example}
\newtheorem{conj}[defi]{Conjecture}
\newtheorem{rema}[defi]{Remark}
 \title{The affine property of quasi-free states on self-dual CAR algebras}
 \author{Yusuke SAWADA}
\date{}
\begin{document}
\maketitle
\begin{center}
\underline{Abstract}
\end{center}
We study conditions for which the correspondence between quasi-free states on self-dual CAR algebras and covariant operators is affine.

\section{Introduction}

Araki\cite{arak} generalized the classification of gauge invariant quasi-free states of canonical anticommutation relations obtained by Powers-St\o
mer\cite{powe-stom} to arbitrary quasi-free states up to quasi-equivalence. More specifically, he found a necessary and sufficient condition for which the GNS representations $\pi_\phi$ and $\pi_{\phi'}$ associated with quasi-free states $\phi$ and $\phi'$ on a self-dual CAR algebra are quasi-equivalent. 

In this paper, we provide a new problem with respect to the convexity of the correspondence between quasi-free states and covariant operators, and partially solve it. Suppose $\phi_S$ is a quasi-free state on a self-dual CAR algebra associated with a covariance operator $S$. In general, for given covariant operators $S$ and $S'$, the correspondence is not affine, that is $\phi_{\lambda
S+(1-\lambda)S'}\neq\lambda\phi_S+(1-\lambda)\phi_{S'}$ holds for some $0\leq\lambda\leq1$. Hence the set of all quasi-free states is not a convex. The problem is when the correspondence is affine, and was pointed out by Professor Shigeru Yamagami. We find a necessary condition for which the correspondence is affine, and moreover a necessary and sufficient condition for it when in presence of a finite dimensional Hilbert space and commuting covariant operators $S$ and $S'$.

\section{Preliminaries}

In this section, we recall the notion of quasi-free states on self-dual CAR algebras based on \cite{arak}.

Let $\mathcal{H}$ be a complex Hilbert space with scalar product $(\cdot,\cdot)$ and $\Gamma:\mathcal{H}\to\mathcal{H}$ an anti-unitary involution on $\mathcal{H}$. A self-dual CAR algebra $A(\mathcal{H},\Gamma)$ is defined as a universal $C^*$-algebra generated by a set $\{b(\xi)\mid\xi\in\mathcal{H}\}$ with relations
\begin{eqnarray*}
&&b(\alpha\xi+\beta\eta)=\alpha
b(\xi)+\beta
b(\eta)\quad\mbox{if}\ \alpha,\beta\in\mathbb{C},\ \xi,\eta\in\mathcal{H},\\
&&b(\Gamma\xi)=b(\xi)^*\quad\mbox{if}\ \xi\in\mathcal{H},\\
&&[b(\xi),b(\eta)]_{+}:=b(\xi)b(\eta)^*+b(\eta)^*b(\xi)=(\xi,\eta){\bf1}\quad\mbox{if}\ \xi,\eta\in\mathcal{H}.
\end{eqnarray*}
For every state $\phi$ on a self-dual CAR algebra $A(\mathcal{H},\Gamma)$, there exists a positive contractive operator $S$ on $\mathcal{H}$ such that
\begin{eqnarray}
(S\xi,\eta)=\phi(b(\eta)^*b(\xi))
\end{eqnarray}
for all $\xi,\eta\in\mathcal{H}$, and satisfying
\begin{eqnarray}
0\leq
S\leq{\bf1},\\
\Gamma
S\Gamma={\bf1}-S.
\end{eqnarray}
The operator $S$ is called a covariance operator of $\phi$. Quasi-free
states are defined as follows.
\begin{defi}
A state $\phi$ on $A(\mathcal{H},\Gamma)$ is called quasi-free if it satisfies
\begin{eqnarray*}
\phi(b(\xi_1)\cdots
b(\xi_k))= \left\{
    \begin{array}{l}
\hspace{-5pt}(-1)^{\frac{(n-1)n}{2}}\displaystyle\sum_{\sigma\in
T_{2n}}\mbox{\rm{sgn}}(\sigma)\prod_{i=1}^n\phi(b(\xi_{\sigma(i)})b(\xi_{\sigma(i+n)}))\quad\mbox{if}\ k=2n,\\
0\quad\mbox{if}\ k=2n+1,
    \end{array}
  \right.
\end{eqnarray*}
where
\[
T_{2n}=\{\sigma\in
S_{2n}\mid\sigma(1)<\cdots<\sigma(n),\ \sigma(i)<\sigma(i+n)\quad\mbox{if}\ i=1,\cdots,n\}.
\]
\end{defi}
For every bounded linear operator $S$ satisfying $(2)$ and $(3)$, there exists a unique quasi-free state $\phi_S$ satisfying (1). In other words, (1) yields a one-to-one correspondence between covariance operators and quasi-free states. Hence a bounded linear operator satisfying $(2)$ and $(3)$ is simply called a covariance operator.

\section{The affine property of quasi-free states}

In this section, we discuss the convexity of quasi-free states. For covariance operators $S$ and $S'$ and $0\leq\lambda\leq1$, $\lambda
S+(1-\lambda)S'$ is a covariance operator. We say that the couple $(S,S')$ has the affine property if $\phi_{\lambda
S+(1-\lambda)S'}=\lambda\phi_S+(1-\lambda)\phi_{S'}$ holds for all $0\leq\lambda\leq1$. If $\lambda\phi_S+(1-\lambda)\phi_{S'}$ is a quasi-free state for all $0\leq\lambda\leq1$, by the uniqueness of covariant operators, the couple $(S,S')$ has the affine property. Now, we obtain a necessary condition ensuring two covariant operators have the affine property.
\begin{theo}
If a couple $(S,S')$ of covariance operators has the affine property, then $(S\xi,\Gamma\xi)=(S'\xi,\Gamma\xi)$ holds for all $\xi\in\mathcal{H}$.
\end{theo}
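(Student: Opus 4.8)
The plan is to push the hypothesis down to the four‑point functions. For $0\le\lambda\le1$ set $T_\lambda=\lambda S+(1-\lambda)S'$ and $\psi_\lambda=\lambda\phi_S+(1-\lambda)\phi_{S'}$. By $(1)$ and linearity the covariance operator of the state $\psi_\lambda$ is exactly $T_\lambda$, so by the uniqueness of the quasi-free state with a given covariance operator the couple $(S,S')$ has the affine property precisely when $\psi_\lambda=\phi_{T_\lambda}$, that is, when $\psi_\lambda$ is quasi-free, for every $\lambda$. Since quasi-freeness already constrains the $k=4$ term of a state, I would compare the $k=4$ expansions of $\psi_\lambda$ and of $\phi_{T_\lambda}$.

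First I would record that, using $b(\eta)^*=b(\Gamma\eta)$ together with $(1)$, every covariance operator $R$ satisfies $\phi_R(b(\xi)b(\eta))=(R\eta,\Gamma\xi)$. Writing $P^R_{ij}:=\phi_R(b(\xi_i)b(\xi_j))=(R\xi_j,\Gamma\xi_i)$ and evaluating the case $k=4$ of the definition of quasi-freeness — the index set $T_4$ has three elements and one gets $\phi_R(b(\xi_1)b(\xi_2)b(\xi_3)b(\xi_4))=P^R_{12}P^R_{34}-P^R_{13}P^R_{24}+P^R_{14}P^R_{23}$ — I would apply this with $R\in\{S,S',T_\lambda\}$. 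Two-point functions are additive in the state, so $P^{T_\lambda}_{ij}=\lambda P^S_{ij}+(1-\lambda)P^{S'}_{ij}$; introducing the homogeneous quadratic form $Q(x):=x_{12}x_{34}-x_{13}x_{24}+x_{14}x_{23}$ on arrays $(x_{ij})$, the identity $\psi_\lambda(b(\xi_1)\cdots b(\xi_4))=\phi_{T_\lambda}(b(\xi_1)\cdots b(\xi_4))$ becomes $\lambda Q(P^S)+(1-\lambda)Q(P^{S'})=Q(\lambda P^S+(1-\lambda)P^{S'})$. A one-line expansion of $Q$ turns this into $\lambda(1-\lambda)\,Q(P^S-P^{S'})=0$, so taking any $\lambda\in(0,1)$ we obtain $Q(d)=0$ for all $\xi_1,\xi_2,\xi_3,\xi_4\in\mathcal H$, where $d_{ij}:=((S-S')\xi_j,\Gamma\xi_i)$.

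It then remains to read off the diagonal. I would specialize $Q(d)=0$ to the degenerate choice $\xi_4=\xi_3$: there $d_{14}=d_{13}$, $d_{24}=d_{23}$, and $d_{34}=((S-S')\xi_3,\Gamma\xi_3)$, so $Q(d)=d_{12}\,((S-S')\xi_3,\Gamma\xi_3)-d_{13}d_{23}+d_{13}d_{23}=((S-S')\xi_2,\Gamma\xi_1)\cdot((S-S')\xi_3,\Gamma\xi_3)$. Hence $((S-S')\xi_2,\Gamma\xi_1)\,((S-S')\xi_3,\Gamma\xi_3)=0$ for all $\xi_1,\xi_2,\xi_3$; putting $\xi_1=\xi_2=\xi_3=\xi$ yields $((S-S')\xi,\Gamma\xi)^2=0$, whence $(S\xi,\Gamma\xi)=(S'\xi,\Gamma\xi)$. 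I expect the only real work to be the bookkeeping that converts the $k=4$ quasi-free relation into the clean identity $Q(d)=0$ — identifying the three permutations in $T_4$ with the correct signs and then expanding $Q$ — after which the substitution $\xi_4=\xi_3$ exposes the diagonal at once, so I do not anticipate a serious obstacle beyond that algebra.
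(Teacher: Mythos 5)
Your proposal is correct and follows essentially the same route as the paper: both reduce the affine property to the $k=4$ quasi-free relation evaluated with a repeated vector, yielding $\lambda(1-\lambda)\,((S-S')\eta,\Gamma\xi)\,((S-S')\zeta,\Gamma\zeta)=0$ and hence the diagonal identity. The only cosmetic difference is that you argue directly (specializing $\xi_3=\xi_4$ and then all vectors equal), whereas the paper fixes the monomial $b(\xi)b(\eta)b(\zeta)b(\zeta)$ from the start and concludes by contradiction via $S=S'$.
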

\begin{proof}
We assume that $\phi_{\lambda
S+(1-\lambda)S'}=\lambda\phi_S+(1-\lambda)\phi_{S'}$ and suppose there exists $\zeta\in\mathcal{H}$ such that $(S\zeta,\Gamma\zeta)\neq(S'\zeta,\Gamma\zeta)$. Fix $\xi,\eta\in\mathcal{H}.$ For every quasi-free state $\phi$, we have 
\begin{eqnarray}
\phi(b(\xi)b(\eta)b(\zeta)b(\zeta))=\phi(b(\xi)b(\eta))\phi(b(\zeta)b(\zeta)).
\end{eqnarray}
This implies that
\begin{eqnarray}
&&\lambda\phi_S(b(\xi)b(\eta)b(\zeta)b(\zeta))+(1-\lambda)\phi_{S'}(b(\xi)b(\eta)b(\zeta)b(\zeta))\nonumber\\
&&=\lambda(S\eta,\Gamma\xi)(S\zeta,\Gamma\zeta)+(1-\lambda)(S'\eta,\Gamma\xi)(S'\zeta,\Gamma\zeta).
\end{eqnarray}
On the other hand, by (4),
\begin{eqnarray}
&&\phi_{\lambda
S+(1-\lambda)S'}(b(\xi)b(\eta)b(\zeta)b(\zeta))\nonumber\\
&&=(\lambda(S\eta,\Gamma\xi)+(1-\lambda)(S'\eta,\Gamma\xi))(\lambda(S\zeta,\Gamma\zeta)+(1-\lambda)(S'\zeta,\Gamma\zeta)).
\end{eqnarray}
If we put $\alpha=(S\zeta,\Gamma\zeta)$ and $\beta=(S'\zeta,\Gamma\zeta)$, then (5) and (6) imply
\begin{eqnarray*}
&&\alpha\lambda(S\eta,\Gamma\xi)+\beta(1-\lambda)(S'\eta,\Gamma\xi)\\
&&=(\lambda(S\eta,\Gamma\xi)+(1-\lambda)(S'\eta,\Gamma\xi))(\alpha\lambda+\beta(1-\lambda)).
\end{eqnarray*}
Therefore, we have
\begin{eqnarray*}
&&\alpha\lambda
S\eta+\beta(1-\lambda)S'\eta=\lambda(\alpha\lambda+\beta(1-\lambda))
S\eta+(1-\lambda)(\alpha\lambda+\beta(1-\lambda))S'\eta\\
&&\mbox{or, equivalently}\ (\alpha-\beta)S\eta=(\alpha-\beta)S'\eta.
\end{eqnarray*}
As $\alpha\neq\beta$, we conclude that $S=S'$, as $\eta$ is arbitrary in $\mathcal{H}$. This is a contradiction.
\end{proof}

Now, we assume that a Hilbert space $\mathcal{H}$ has a finite dimension and two covariance operators commute.

Let $\mathcal{H}$ be a Hilbert space with $\dim\mathcal{H}=2n$ for some $n\in\mathbb{N}$ and $\Gamma$ be an anti-unitary involution on $\mathcal{H}$. For a covariance operator $S$, there are an orthonormal basis $\{\varepsilon_1,\cdots,\varepsilon_n,\Gamma\varepsilon_1,\cdots,\Gamma\varepsilon_n\}$ and $\alpha_1,\cdots,\alpha_n\in[0,1]$ such that
\[
S\varepsilon_i=\alpha_i\varepsilon_i,\quad
S\Gamma\varepsilon_i=(1-\alpha_i)\Gamma\varepsilon_i\quad\mbox{if}\ i=1,\cdots,n.
\]
If we define
\begin{eqnarray*}
b(\varepsilon_i):=\underbrace{\begin{pmatrix}1&0\\0&-1\end{pmatrix}\otimes\cdots\otimes\begin{pmatrix}1&0\\0&-1\end{pmatrix}}_{i-1}\otimes\begin{pmatrix}0&0\\1&0\end{pmatrix}\otimes\underbrace{\begin{pmatrix}1&0\\0&1\end{pmatrix}\otimes\cdots\otimes\begin{pmatrix}1&0\\0&1\end{pmatrix}}_{n-i}
\end{eqnarray*}
for each $i=1,\cdots,n$, then we have
\begin{eqnarray*}
&&A(\mathcal{H},\Gamma)=M(2,\mathbb{C})^{\otimes
n},\\
&&\phi_S(b)=\mbox{\rm{Tr}}\left(\left(\bigotimes_{i=1}^n\begin{pmatrix}\alpha_i&0\\0&1-\alpha_i\end{pmatrix}\right)b\right)\quad\mbox{if}\ b\in
M(2,\mathbb{C})^{\otimes
n}.
\end{eqnarray*}
Let $S'$ be covariance operators which commutes with $S$. Then we can chose $\varepsilon_i$ such that $S$ and $S'$ are characterized by eigenvalues $\alpha_i,\alpha_i'$ as
\begin{eqnarray*}
&&S\varepsilon_i=\alpha_i\varepsilon_i,\quad
S\Gamma\varepsilon_i=(1-\alpha_i)\Gamma\varepsilon_i,\\
&&S'\varepsilon_i=\alpha_i'\varepsilon_i,\quad
S'\Gamma\varepsilon_i=(1-\alpha_i')\Gamma\varepsilon_i
\end{eqnarray*}
by the simultaneous diagonalization. Suppose $0<\lambda<1$ then we have
\begin{eqnarray*}
&&(\lambda
S+(1-\lambda)S')\varepsilon_i=(\lambda\alpha_i+(1-\lambda)\alpha_i')\varepsilon_i,\\
&&(\lambda
S+(1-\lambda)S')\Gamma\varepsilon_i=(1-(\lambda\alpha_i+(1-\lambda)\alpha_i'))\Gamma\varepsilon_i.
\end{eqnarray*}
This implies that
\begin{eqnarray}
&&\lambda\phi_S(b)+(1-\lambda)\phi_{S'}(b)\nonumber\\
&&=\lambda\mbox{\rm{Tr}}\left(\left(\bigotimes_{i=1}^n\begin{pmatrix}\alpha_i&0\\0&1-\alpha_i\end{pmatrix}\right)b\right)+(1-\lambda)\mbox{\rm{Tr}}\left(\left(\bigotimes_{i=1}^n\begin{pmatrix}\alpha_i'&0\\0&1-\alpha_i'\end{pmatrix}\right)b\right),\nonumber\\
&&\hspace{1pt}\\
&&\phi_{\lambda
S+(1-\lambda)S'}(b)=\mbox{\rm{Tr}}\left(\left(\bigotimes_{i=1}^n\begin{pmatrix}\lambda\alpha_i+(1-\lambda)\alpha_i'&0\\0&1-(\lambda\alpha_i+(1-\lambda)\alpha_i')\end{pmatrix}\right)b\right)\nonumber\\
&&\hspace{1pt}
\end{eqnarray}
for all $b\in
M(2,\mathbb{C})^{\otimes
n}$. 

Any vector $\xi\in\mathcal{H}$ has a linear form $\xi=\sum_{i=1}^nx_i\varepsilon_i+\sum_{i=1}^ny_i\Gamma\varepsilon_i$ for some $x_1,\cdots,x_n,y_1,\cdots,y_n\in\mathbb{C}$. Then we have
\begin{eqnarray}
(S\xi,\Gamma\xi)=\sum_{i=1}^n\alpha_ix_iy_i+\sum_{i=1}^n(1-\alpha_i)x_iy_i=\sum_{i=1}^nx_iy_i.
\end{eqnarray}
Thus, $(S\xi,\Gamma\xi)=(S'\xi,\Gamma\xi)$ holds for all $\xi\in\mathcal{H}$

In the case when $n=2$, we consider the affine property. We put $b=\begin{pmatrix}1&0\\0&0\end{pmatrix}$ and $b'=\begin{pmatrix}0&0\\0&1\end{pmatrix}$. Then (7), (8) and equations
\begin{eqnarray*}
&&\left(\begin{pmatrix}\alpha_1&0\\0&1-\alpha_1\end{pmatrix}\otimes\begin{pmatrix}\alpha_2&0\\0&1-\alpha_2\end{pmatrix}\right)\left(b\otimes
b'\right)=\begin{pmatrix}\alpha_1&0\\0&0\end{pmatrix}\otimes\begin{pmatrix}0&0\\0&1-\alpha_2\end{pmatrix},\\
&&\left(\begin{pmatrix}\alpha_1'&0\\0&1-\alpha_1'\end{pmatrix}\otimes\begin{pmatrix}\alpha_2'&0\\0&1-\alpha_2'\end{pmatrix}\right)\left(b\otimes
b'\right)=\begin{pmatrix}\alpha_1'&0\\0&0\end{pmatrix}\otimes\begin{pmatrix}0&0\\0&1-\alpha_2'\end{pmatrix},\\
&&\left(\bigotimes_{i=1}^2\begin{pmatrix}\lambda\alpha_i+(1-\lambda)\alpha_i'&0\\0&1-(\lambda\alpha_i+(1-\lambda)\alpha_i')\end{pmatrix}\right)(b\otimes
b')\\
&&\hspace{70pt}=\begin{pmatrix}\lambda\alpha_1+(1-\lambda)\alpha_1'&0\\0&0\end{pmatrix}\otimes\begin{pmatrix}0&0\\0&1-(\lambda\alpha_2+(1-\lambda)\alpha_2')\end{pmatrix},
\end{eqnarray*}
imply that
\begin{eqnarray*}
&&(\lambda\phi_S+(1-\lambda)\phi_{S'}-\phi_{\lambda
S+(1-\lambda)S'})(b\otimes
b')\nonumber\\
&&=\lambda(\alpha_1-\alpha_1\alpha_2)+(1-\lambda)(\alpha_1'-\alpha_1'\alpha_2')\nonumber\\
&&\hspace{10pt}-(\lambda\alpha_1+(1-\lambda)\alpha_1')(1-(\lambda\alpha_2+(1-\lambda)\alpha_2'))\nonumber\\
&&=\lambda(1-\lambda)(\alpha_1-\alpha_1')(\alpha_2-\alpha_2').
\end{eqnarray*}
If $\alpha_1=\alpha_1'$, we can show that $\lambda\phi_S+(1-\lambda)\phi_{S'}=\phi_{\lambda
S+(1-\lambda)S'}$ by computations. Now, we have obtained the following proposition.
\begin{prop}
Let $\mathcal{H}$ be the four dimensional Hilbert space, $\Gamma$ be an anti-unitary involution on $\mathcal{H}$ and $(S,S')$ be a couple of mutually commuting covariance operators on $\mathcal{H}$. Then $(S,S')$ has the affine property if and only if $(\alpha_1-\alpha_1')(\alpha_2-\alpha_2')=0$. Moreover if $\lambda\phi_S+(1-\lambda)\phi_{S'}=\phi_{\lambda
S+(1-\lambda)S'}$ holds for some $0<\lambda<1$, we have $(\alpha_1-\alpha_1')(\alpha_2-\alpha_2')=0$, and hence the couple $(S,S')$ has the affine property.
\end{prop}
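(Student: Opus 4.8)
The plan is to treat the two implications separately, leaning on the identity for $(\lambda\phi_S+(1-\lambda)\phi_{S'}-\phi_{\lambda S+(1-\lambda)S'})(b\otimes b')$ already established above, together with the tensor-product form of $\phi_S$, $\phi_{S'}$ and $\phi_{\lambda S+(1-\lambda)S'}$ in the common eigenbasis coming from the simultaneous diagonalisation.

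For the \emph{if} direction, suppose $(\alpha_1-\alpha_1')(\alpha_2-\alpha_2')=0$. Since the labelling of the indices $1,2$ — equivalently, of the two tensor factors of $M(2,\mathbb{C})^{\otimes 2}$ — is at our disposal, we may assume without loss of generality that $\alpha_1=\alpha_1'$. Put $D_1=\begin{pmatrix}\alpha_1&0\\0&1-\alpha_1\end{pmatrix}$; this is the common first tensor factor of the density matrices of both $\phi_S$ and $\phi_{S'}$, and, because $\lambda\alpha_1+(1-\lambda)\alpha_1'=\alpha_1$, of the density matrix of $\phi_{\lambda S+(1-\lambda)S'}$ as well. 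Using linearity of the trace in the second tensor slot, one reads off for every $b\in M(2,\mathbb{C})^{\otimes 2}$ that $\lambda\phi_S(b)+(1-\lambda)\phi_{S'}(b)$ equals $\mathrm{Tr}\Bigl(\bigl(D_1\otimes\begin{pmatrix}\lambda\alpha_2+(1-\lambda)\alpha_2'&0\\0&1-(\lambda\alpha_2+(1-\lambda)\alpha_2')\end{pmatrix}\bigr)\,b\Bigr)=\phi_{\lambda S+(1-\lambda)S'}(b)$, which is the affine property. This is the routine verification alluded to in the discussion preceding the proposition.

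For the \emph{only if} direction and the \emph{moreover} clause, I would invoke the displayed computation $(\lambda\phi_S+(1-\lambda)\phi_{S'}-\phi_{\lambda S+(1-\lambda)S'})(b\otimes b')=\lambda(1-\lambda)(\alpha_1-\alpha_1')(\alpha_2-\alpha_2')$. If $(S,S')$ has the affine property, the left-hand side vanishes for every $\lambda\in[0,1]$; choosing any $0<\lambda<1$ forces $(\alpha_1-\alpha_1')(\alpha_2-\alpha_2')=0$. Likewise, if merely $\lambda\phi_S+(1-\lambda)\phi_{S'}=\phi_{\lambda S+(1-\lambda)S'}$ for one fixed $0<\lambda<1$, the same identity gives $\lambda(1-\lambda)(\alpha_1-\alpha_1')(\alpha_2-\alpha_2')=0$, hence $(\alpha_1-\alpha_1')(\alpha_2-\alpha_2')=0$; the \emph{if} direction just proved then yields the affine property.

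The only point requiring care — and the closest thing to an obstacle — is the reduction in the \emph{if} direction: one must check that the simultaneous diagonalisation producing the basis $\{\varepsilon_1,\varepsilon_2,\Gamma\varepsilon_1,\Gamma\varepsilon_2\}$ and the identification $A(\mathcal{H},\Gamma)=M(2,\mathbb{C})^{\otimes 2}$ are symmetric under interchanging the two indices, so that the hypothesis "$\alpha_1=\alpha_1'$ or $\alpha_2=\alpha_2'$" may genuinely be collapsed to "$\alpha_1=\alpha_1'$"; this is immediate from the symmetry of the tensor construction under the flip of the two factors. Everything else is linear bookkeeping in $M(2,\mathbb{C})\otimes M(2,\mathbb{C})$.
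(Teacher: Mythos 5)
Your proposal is correct and follows essentially the same route as the paper: the ``only if'' and ``moreover'' parts come from the displayed evaluation on the test element $b\otimes b'$, whose vanishing at a single $0<\lambda<1$ forces $(\alpha_1-\alpha_1')(\alpha_2-\alpha_2')=0$, and the ``if'' part is the computation the paper leaves implicit (``by computations''), which you carry out via linearity of the trace in the free tensor slot after the harmless relabelling $\alpha_1=\alpha_1'$. The only difference is that you make the relabelling and the trace identity explicit, which fills in details the paper omits rather than changing the argument.
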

The last statement of Proposition 3.2 derives from commutativity of $S$ and $S'$, similarly in Theorem 3.4 as bellow.
\begin{rema}
In the case when $\dim\mathcal{H}=2$, the equation $\lambda\phi_S(b)+(1-\lambda)\phi_{S'}=\phi_{\lambda
S+(1-\lambda)S'}$ holds for all $\alpha_1,\alpha_1'\in\mathbb{C}$ automatically. 
\end{rema}
Now, suppose $\dim\mathcal{H}=2n+1$ for some $n\in\mathbb{N}$, $\Gamma$ is an anti-unitary involution and $S$ is a covariance operator. Then there exists a unit vector $\varepsilon_0\in\mathcal{H}$, in addition to $\varepsilon_1,\cdots,\varepsilon_n$ in the case when $\dim\mathcal{H}=2n$, such that
\[
S\varepsilon_0=\frac{1}{2}\varepsilon_0,\quad\Gamma\varepsilon_0=\varepsilon_0.
\]
We have
\[
b(\varepsilon_0)^*=b(\varepsilon_0),\quad
b(\varepsilon_0)^2=\frac{1}{2}{\bf1},\quad[b(\varepsilon_0),b(\varepsilon_i)]_{+}=0,\quad\mbox{if}\ i=1,\cdots,n.
\]
Since the $C^*$-subalgebra generated by $b(\varepsilon_0)$ is isomorphic to $\mathbb{C}\oplus\mathbb{C}$, we have
\begin{eqnarray}
&&A(\mathcal{H},\Gamma)=M(2,\mathbb{C})^{\otimes
n}\otimes(\mathbb{C}\oplus\mathbb{C}),\nonumber\\
&&\phi_S(b)=\mbox{\rm{Tr}}\left(\left(\left(\bigotimes_{i=1}^n\begin{pmatrix}\alpha_i&0\\0&1-\alpha_i\end{pmatrix}\right)\otimes\begin{pmatrix}\frac{1}{2}&0\\0&\frac{1}{2}\end{pmatrix}\right)b\right)\nonumber\\
&&\hspace{150pt}\mbox{if}\ b\in
M(2,\mathbb{C})^{\otimes
n}\otimes(\mathbb{C}\oplus\mathbb{C}).
\end{eqnarray}
In this case, $(S\xi,\Gamma\xi)=(S'\xi,\Gamma\xi)$ holds for all $\xi\in\mathcal{H}$ too.

We obtain the following result with respect to a necessary and sufficient condition for which $\lambda\phi_S+(1-\lambda)\phi_{S'}=\phi_{\lambda
S+(1-\lambda)S'}$ holds in the case when a Hilbert space $\mathcal{H}$ is finite dimensional.
\begin{theo}
Let $\mathcal{H}$ be a Hilbert space with a dimension $1<k<\infty$, $\Gamma$ an anti-unitary involution on $\mathcal{H}$, and $S$ and $S'$ mutually commuting covariance operators on $\mathcal{H}$. Suppose $k=2n$ or $k=2n+1$ for some $n\in\mathbb{N}$, and $\alpha_i$ and $\alpha_i'$ are coefficients associated to $S$ and $S'$, respectively. Then $(S,S')$ has the affine property if and only if $\alpha_i=\alpha_i'$ for all $i=1,\cdots,n$ except for at most one $i_0=1,\cdots,n$. Moreover if $\lambda\phi_S+(1-\lambda)\phi_{S'}=\phi_{\lambda
S+(1-\lambda)S'}$ holds for some $0<\lambda<1$, the couple $(S,S')$ has the affine property.
\end{theo}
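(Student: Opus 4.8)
The plan is to turn the affine property into a family of one-variable polynomial identities indexed by a sign-tuple, and then to analyse where those polynomials can vanish on $(0,1)$. Using the concrete description recalled above, after the simultaneous diagonalisation we have $A(\mathcal{H},\Gamma)\cong M(2,\mathbb{C})^{\otimes n}$ when $k=2n$ and $A(\mathcal{H},\Gamma)\cong M(2,\mathbb{C})^{\otimes n}\otimes(\mathbb{C}\oplus\mathbb{C})$ when $k=2n+1$, the extra factor carrying the density $\mathrm{diag}(\tfrac{1}{2},\tfrac{1}{2})$ which is common to $\phi_S$, $\phi_{S'}$ and $\phi_{\lambda S+(1-\lambda)S'}$ and hence plays no role in the comparison. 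Each of these states has the form $\mathrm{Tr}(\rho\,\cdot\,)$ with $\rho$ diagonal in the tensor basis: $\rho_S=\bigotimes_{i=1}^n\mathrm{diag}(\alpha_i,1-\alpha_i)$, and likewise with $\alpha_i'$, respectively with $\lambda\alpha_i+(1-\lambda)\alpha_i'$, by formulas (7) and (8). Since these three matrices are simultaneously diagonal, $\lambda\phi_S+(1-\lambda)\phi_{S'}=\phi_{\lambda S+(1-\lambda)S'}$ holds if and only if their diagonal entries coincide; these entries are indexed by $s=(s_1,\dots,s_n)\in\{0,1\}^n$ and are extracted by testing the states against the matrix units $\bigotimes_{i=1}^n e_{s_i s_i}$. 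Writing $a_i\in\{\alpha_i,1-\alpha_i\}$ and $a_i'\in\{\alpha_i',1-\alpha_i'\}$ for the entries selected by $s_i$ and putting
\[
g_s(\lambda):=\lambda\prod_{i=1}^n a_i+(1-\lambda)\prod_{i=1}^n a_i'-\prod_{i=1}^n\bigl(\lambda a_i+(1-\lambda)a_i'\bigr),
\]
the statement becomes: $(S,S')$ has the affine property if and only if $g_s\equiv 0$ for every $s$, while the last sentence becomes: if $g_s(\lambda_0)=0$ for all $s$ at a single $\lambda_0\in(0,1)$, then $g_s\equiv 0$ for every $s$.

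The second step is a direct expansion. Setting $\delta_i:=a_i-a_i'=(-1)^{s_i}(\alpha_i-\alpha_i')$, so that $\lambda a_i+(1-\lambda)a_i'=a_i'+\lambda\delta_i$, and multiplying out the product, the empty-set and singleton contributions cancel and one obtains
\[
g_s(\lambda)=\sum_{\substack{T\subseteq\{1,\dots,n\}\\ |T|\ge 2}}\bigl(\lambda-\lambda^{|T|}\bigr)\Bigl(\prod_{i\in T}\delta_i\Bigr)\Bigl(\prod_{i\notin T}a_i'\Bigr).
\]
Because $\delta_i=0$ precisely when $\alpha_i=\alpha_i'$, independently of $s$, if at most one index $i$ has $\alpha_i\ne\alpha_i'$ then $\prod_{i\in T}\delta_i=0$ whenever $|T|\ge 2$, so $g_s\equiv 0$ for every $s$; this proves sufficiency of the stated condition, and in particular reproves Remark 3.3, since for $n=1$ there is no subset $T$ with $|T|\ge 2$.

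For necessity and the final sentence, assume $J:=\{i:\alpha_i\ne\alpha_i'\}$ has $|J|\ge 2$; I would exhibit a single $s$ for which $g_s$ is strictly positive throughout $(0,1)$. For $i\in J$ choose $s_i$ so that $\delta_i=(-1)^{s_i}(\alpha_i-\alpha_i')>0$, possible since $\alpha_i\ne\alpha_i'$; for $i\notin J$ choose $s_i$ so that $a_i'>0$, possible since $\alpha_i'$ and $1-\alpha_i'$ cannot both vanish. In the displayed sum only subsets $T\subseteq J$ contribute, and for each such $T$ with $|T|\ge 2$ we have $\lambda-\lambda^{|T|}>0$ on $(0,1)$, $\prod_{i\in T}\delta_i>0$, and $\prod_{i\notin T}a_i'\ge 0$, so every summand is $\ge 0$; moreover the summand indexed by $T=J$ has $\prod_{i\notin J}a_i'>0$ and is therefore strictly positive. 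Hence $g_s(\lambda)>0$ for every $\lambda\in(0,1)$. This shows at once that the affine property fails when $|J|\ge 2$, so the condition that $\alpha_i=\alpha_i'$ for all $i$ except at most one is necessary, and that $\lambda\phi_S+(1-\lambda)\phi_{S'}\ne\phi_{\lambda S+(1-\lambda)S'}$ for \emph{every} $\lambda\in(0,1)$ in that case, which yields the last sentence by contraposition.

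The reduction to diagonal entries and the expansion of $g_s$ are routine bookkeeping; the delicate step, and the real content of the proof, is the single choice of sign-tuple $s$ that forces all nonzero terms of $g_s$ to carry the same sign on $(0,1)$. This is exactly what upgrades the mere nonvanishing of $g_s$ as a polynomial (enough for the equivalence) to its nonvanishing on the whole of $(0,1)$ (needed for the moreover clause), and it relies essentially on the $\alpha_i,\alpha_i'$ being real and lying in $[0,1]$, so that each factor $\prod_{i\notin T}a_i'$ is automatically nonnegative.
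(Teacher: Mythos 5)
Your proof is correct, and for the key direction it takes a genuinely different route from the paper. The framework is shared: both use the simultaneous diagonalization and formulas (7), (8), (10), both dispose of the odd-dimensional case via the common $\mathrm{diag}(\tfrac12,\tfrac12)$ factor, and your sufficiency argument (all subset terms with $|T|\ge 2$ vanish when at most one eigenvalue differs) is the same multilinearity observation the paper invokes. The divergence is in necessity and the final clause: the paper tests the discrepancy against $e_{11}$ and $e_{22}$ placed in two chosen slots with the identity elsewhere, which collapses to the $n=2$ computation of Proposition 3.2 and yields the factored polynomial $\lambda(1-\lambda)(\alpha_i-\alpha_i')(\alpha_j-\alpha_j')$, so vanishing at a single interior $\lambda$ forces $(\alpha_i-\alpha_i')(\alpha_j-\alpha_j')=0$ for every pair $i\neq j$. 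You instead compare all diagonal entries at once, expand $g_s$ over subsets $T$, and choose the sign tuple $s$ so that every term is nonnegative and the $T=J$ term strictly positive, producing one fixed diagonal matrix unit on which the two states disagree for every $\lambda\in(0,1)$. Both arguments establish the equivalence together with the \emph{moreover} statement; the paper's pairwise test is shorter and reuses Proposition 3.2, while yours is self-contained and gives the marginally stronger conclusion that failure is witnessed by a single test element uniformly in $\lambda$, at the price of the combinatorial expansion and the (correctly identified) reliance on $\alpha_i,\alpha_i'\in[0,1]$. One step you use implicitly and could state explicitly: equality of the three functionals is equivalent to equality of their diagonal entries because all three density matrices are diagonal in the common eigenbasis, so all three states annihilate the off-diagonal matrix units; with that remark included, the argument is complete.
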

\begin{proof}
First, we assume that $k=2n$. By (7) and (8), we have
\begin{eqnarray*}
&&(\lambda\phi_S+(1-\lambda)\phi_{S'}-\phi_{\lambda
S+(1-\lambda)S'})\left(\bigotimes_{i=1}^n\begin{pmatrix}b_{11}^i&b_{12}^i\\b_{21}^i&b_{22}^i\end{pmatrix}\right)\nonumber\\
&&=\lambda\prod_{i=1}(\alpha_ib_{11}^i+(1-\alpha_i)b_{22}^i)+(1-\lambda)\prod_{i=1}(\alpha_i'b_{11}^i+(1-\alpha_i')b_{22}^i)\nonumber\\
&&\hspace{50pt}-\prod_{i=1}^n((\lambda\alpha_i+(1-\lambda)\alpha_i')b_{11}^i+(1-(\lambda\alpha_i+(1-\lambda)\alpha_i'))b_{22}^i)\nonumber\\
&&\hspace{1pt}
\end{eqnarray*}
for all $\begin{pmatrix}b_{11}^i&b_{12}^i\\b_{21}^i&b_{22}^i\end{pmatrix}\in
M(2,\mathbb{C})$, $i=1,\cdots,n$ and $0\leq\lambda\leq1$. Thus, if there exists $i_0=1,\cdots,n$ such that $\alpha_i=\alpha_i'$ for $i\neq
i_0$, then $\lambda\phi_S+(1-\lambda)\phi_{S'}=\phi_{\lambda
S+(1-\lambda)S'}$ for all $0\leq\lambda\leq1$. Conversely, if $\lambda\phi_S+(1-\lambda)\phi_{S'}=\phi_{\lambda
S+(1-\lambda)S'}$ for some $0<\lambda<1$, then if $i\neq
j$, 
\[
(\alpha_i-\alpha_i')(\alpha_j-\alpha_j')=0.
\]
Indeed, when $i\neq
j$, after putting $b_{11}^l=b_{22}^l=1$ for $l\neq
i,j$ we can use Proposition 3.2. Consequently the statement is proved in the case when $\mathcal{H}$ has an even dimension.

When $k=2n+1$, since
\begin{eqnarray*}
&&(\lambda\phi_S+(1-\lambda)\phi_{S'}-\phi_{\lambda
S+(1-\lambda)S'})\left(\left(\bigotimes_{i=1}^n\begin{pmatrix}b_{11}^i&b_{12}^i\\b_{21}^i&b_{22}^i\end{pmatrix}\right)\otimes\begin{pmatrix}b_{11}^0&0\\0&b_{22}^0\end{pmatrix}\right)\nonumber\\
&&=\frac{1}{2}(b_{11}^0+b_{22}^0)(\lambda\prod_{i=1}(\alpha_ib_{11}^i+(1-\alpha_i)b_{22}^i)+(1-\lambda)\prod_{i=1}(\alpha_i'b_{11}^i+(1-\alpha_i')b_{22}^i)\nonumber\\
&&\hspace{75pt}-\prod_{i=1}^n((\lambda\alpha_i+(1-\lambda)\alpha_i')b_{11}^i+(1-(\lambda\alpha_i+(1-\lambda)\alpha_i'))b_{22}^i))\\
&&\hspace{100pt}\mbox{if}\ \begin{pmatrix}b_{11}^i&b_{12}^i\\b_{21}^i&b_{22}^i\end{pmatrix}\in
M(2,\mathbb{C}),\ i=1,\cdots,n,\ b_{11}^0,b_{22}^0\in\mathbb{C}
\end{eqnarray*}
from (10), the thesis follows the case $k=2n$.
\end{proof}
\begin{rema}
If we change the wording of Theorem 3.4, since covariance operators correspond to quasi-free states affinely in the case when $\dim\mathcal{H}=2$, see Remark 3.3, the correspondence is affine when the difference of two covariance operators has rank two.
\end{rema}

In the finite dimensional case, CAR algebras and quasi-free states have concrete representations, and hence we could analyze the affine property for two commuting covariance operators.
\begin{rema}
For a couple $(S,S')$ of covariance operators, the condition $(S\xi,\Gamma\xi)=(S'\xi,\Gamma\xi)$ for all $\xi\in\mathcal{H}$ in Theorem 3.1 is not a sufficient condition for that the couple has the affine property. In fact in the setting of Theorem 3.4, by (9) the condition holds. However, by Theorem 3.4, we can get an example $(S,S')$ which does not have the affine property.
\end{rema}
Now we discuss a relation between the affine property and the dimension of the image of $S-S'$ for covariant operators $S$ and $S'$, i.e. the rank of $S-S'$.
\begin{prop}
If $S$ and $S'$ are covariance operators on a Hilbert space $\mathcal{H}$ with dimension $k$ such that  $\dim\ker(S-S')\geq
k-1$, then $S=S'$.
\end{prop}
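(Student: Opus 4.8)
The plan is to work with the self-adjoint operator $T := S - S'$ and to use only two of its properties. First, $T$ is self-adjoint, being a difference of positive contractions. Second, $\Gamma T\Gamma = -T$: this is immediate from relation $(3)$, since $\Gamma(S-S')\Gamma = (\mathbf{1}-S)-(\mathbf{1}-S') = -(S-S')$. Because $\dim\mathcal{H}=k$ and $\dim\ker T \geq k-1$, the rank of $T$ is at most $1$, so it suffices to rule out $\operatorname{rank} T = 1$.

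Assume $T\neq 0$. A nonzero self-adjoint rank-one operator has the form $T\xi = c(\xi,\psi)\psi$ for a unit vector $\psi\in\mathcal{H}$ and a real $c\neq 0$; in projection notation $T = cP_\psi$, with $P_\psi$ the orthogonal projection onto $\mathbb{C}\psi$. I would then compute $\Gamma T\Gamma$ directly from the anti-unitarity of $\Gamma$: using $\psi = \Gamma(\Gamma\psi)$ one has $(\Gamma\xi,\psi) = \overline{(\xi,\Gamma\psi)}$, so, $c$ being real, $\Gamma T\Gamma\,\xi = \Gamma\big(c(\Gamma\xi,\psi)\psi\big) = c(\xi,\Gamma\psi)\Gamma\psi$. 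Since $\|\Gamma\psi\| = \|\psi\| = 1$, this says $\Gamma T\Gamma = cP_{\Gamma\psi}$, again $c$ times a rank-one orthogonal projection.

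Now $\Gamma T\Gamma = -T$ forces $cP_{\Gamma\psi} = -cP_\psi$, hence $P_{\Gamma\psi} = -P_\psi$ because $c\neq 0$. This is impossible: the left-hand side is a nonzero orthogonal projection, hence positive with operator norm $1$, while the right-hand side is negative semidefinite (equivalently, in finite dimensions, $\operatorname{Tr} P_{\Gamma\psi} = 1$ whereas $\operatorname{Tr}(-P_\psi) = -1$). Therefore $T = 0$, that is $S = S'$.

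I do not expect a real obstacle; the argument is short once $T$ is isolated with these two properties. The only delicate point is the bookkeeping with the anti-unitary $\Gamma$: one must check that $\Gamma T\Gamma$ is genuinely the rank-one operator attached to the vector $\Gamma\psi$ (and not some conjugate-linear object), and in particular that it is again self-adjoint and positive semidefinite up to the sign of $c$, so that the identity $\Gamma T\Gamma = -T$ can be read as $cP_{\Gamma\psi} = -cP_\psi$ and the contradiction follows from positivity.
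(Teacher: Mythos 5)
Your proof is correct, and it reaches the conclusion by a slightly different mechanism than the paper, though both hinge on the same relation. Like the paper, you isolate $T=S-S'$ and use $\Gamma T\Gamma=-T$ (the paper writes this as $\Gamma(S-S')=-(S-S')\Gamma$, which is equivalent since $\Gamma^2={\bf 1}$). The paper then argues spectrally: $\Gamma$ carries the $\lambda$-eigenspace of $T$ onto the $(-\lambda)$-eigenspace, so the nonzero eigenvalues pair off and the orthogonal complement of $\ker T$ has even dimension, which is incompatible with $\dim\ker T=k-1$; this formulation in fact yields a bit more, namely that ${\rm rank}(S-S')$ is always even, which is the structural fact behind Remark 3.5 and the discussion following the proposition. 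You instead specialize to the rank-one situation, write $T=cP_\psi$ with $c\in\mathbb{R}\setminus\{0\}$ and $\psi$ a unit vector, and compute directly that $\Gamma T\Gamma=cP_{\Gamma\psi}$; the identity $\Gamma T\Gamma=-T$ then forces $P_{\Gamma\psi}=-P_\psi$, ruled out by positivity or by comparing traces. Your bookkeeping with the anti-unitary is the delicate point and you handle it correctly: $(\Gamma\xi,\psi)=\overline{(\xi,\Gamma\psi)}$ follows from $(\Gamma a,\Gamma b)=(b,a)$ with $a=\xi$, $b=\Gamma\psi$, and the reality of $c$ is used exactly where the conjugate-linearity of $\Gamma$ would otherwise introduce a $\bar c$. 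A one-line variant of your finish: since $\mathrm{Tr}(\Gamma T\Gamma)=\overline{\mathrm{Tr}(T)}=\mathrm{Tr}(T)$ for self-adjoint $T$, the relation $\Gamma T\Gamma=-T$ gives $\mathrm{Tr}(T)=0$, while $\mathrm{Tr}(cP_\psi)=c\neq0$. Both routes are sound; the paper's eigenvalue-pairing argument generalizes more readily (ruling out any odd rank at once), while yours is more elementary and self-contained for the rank-one case actually needed.
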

\begin{proof}
Assume $\dim\ker(S-S')=k-1$. In other words, the eigenspace of $S-S'$ for $0$ has the dimension $k-1$. On the other hand, by $\Gamma(S-S')=-(S-S')\Gamma$, if $\lambda\neq0$ is an eigenvalue of $S-S'$, then $-\lambda$ is so. Hence the direct sum of the eigenspaces of $S-S'$ for all non-zero eigenvalues has an even dimension. This is a contradiction.

We have proved that $\dim\ker(S-S')=k$, i.e. $S=S'$.
\end{proof}
By Theorem 3.4, for every couple $(S,S')$ of covariant commutative operators on the $k$ dimensional Hilbert space $\mathcal{H}$ has the affine property, we have $\dim{\rm
Im}(S-S')\leq2$. 

Now we consider the affine property for two non-commutative covariance operators in the finite dimensional case.
\begin{exam}
Suppose $\mathcal{H}=\mathbb{C}^3$, $\Gamma:\mathcal{H}\to\mathcal{H}$ is a unitary involution defined by
\[
\Gamma\begin{pmatrix}x\\y\\z\end{pmatrix}=\begin{pmatrix}\overline{y}\\
\overline{x}\\
\overline{z}\end{pmatrix}
\]
for each $\begin{pmatrix}x\\y\\z\end{pmatrix}\in\mathcal{H}$ and
\[
S=\frac{1}{6}\begin{pmatrix}2&0&1\\0&4&-1\\1&-1&3\end{pmatrix},\quad
S'=\frac{1}{6}\begin{pmatrix}3&0&1\\0&3&-1\\1&-1&3\end{pmatrix}.
\]
Then $S, S'$ are non-commutative covariance operators and $\dim{\rm
Im}(S-S')=2$. The eigenvalues of $S$ and $S'$ are $\frac{1}{2},\frac{3+\sqrt{3}}{6},\frac{3-\sqrt{3}}{6}$ and  $\frac{1}{2},\frac{3+\sqrt{2}}{6},\frac{3-\sqrt{2}}{6}$, respectively. If we put $\lambda=\frac{1}{2}$ and $b=\begin{pmatrix}-1&0\\0&1\end{pmatrix}\otimes\begin{pmatrix}1&0\\0&1\end{pmatrix}\in
M(2,\mathbb{C})\otimes(\mathbb{C}\oplus\mathbb{C})$, then we have
\begin{eqnarray*}
\phi_{S}(b)=-\frac{\sqrt{3}}{3},\quad\phi_{S'}(b)=-\frac{\sqrt{2}}{3},\quad\phi_{\lambda
S+(1-\lambda)S'}(b)=-\frac{1}{2}
\end{eqnarray*}
by (10), and hence the couple $(S,S')$ does not have the affine property.
\end{exam}
As opposed to the commutative case, every couple $(S,S')$ with $\dim{\rm
Im}(S-S')=2$ always does not have the affine property as Example 1. Now, we can make naturally the following conjecture.
\begin{conj}
Let $(S,S')$ be a couple of covariant operators having the affine property. If $S\neq
S'$, then $\dim{\rm
Im}(S-S')=2$.
\end{conj}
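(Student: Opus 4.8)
The plan is to play the affine (degree-one) dependence of $S_\lambda:=\lambda S+(1-\lambda)S'$ on $\lambda$ against the fact that the four-point function of a quasi-free state is quadratic in its two-point functions. Fix $\xi_1,\xi_2,\xi_3,\xi_4\in\mathcal H$. By the case $n=2$ of the Wick formula in Definition 2.1, together with $(1)$, for any covariance operator $R$ one has
\[
\phi_R\big(b(\xi_1)b(\xi_2)b(\xi_3)b(\xi_4)\big)=m_{12}m_{34}-m_{13}m_{24}+m_{14}m_{23},\qquad m_{ij}:=(R\xi_j,\Gamma\xi_i)\ \ (i<j),
\]
that is, the Pfaffian of the antisymmetric matrix with those upper entries. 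Taking $R=S_\lambda=S'+\lambda(S-S')$ makes each $m_{ij}$ affine in $\lambda$, so the left-hand side is a polynomial of degree $\le2$ in $\lambda$ whose coefficient of $\lambda^2$ is $\mathrm{Pf}(D)$, where $D$ is the antisymmetric matrix with $D_{ij}=((S-S')\xi_j,\Gamma\xi_i)$ for $i<j$. Since the affine property forces $\phi_{S_\lambda}(x)=\lambda\phi_S(x)+(1-\lambda)\phi_{S'}(x)$, which is affine in $\lambda$, we must have $\mathrm{Pf}(D)=0$ for \emph{every} choice of $\xi_1,\dots,\xi_4$; the rest of the argument shows that $\mathrm{Pf}(D)$ can be made nonzero once $\dim\mathrm{Im}(S-S')\ge4$.

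Write $T:=S-S'$. From $\Gamma S\Gamma=\mathbf 1-S$ and $\Gamma S'\Gamma=\mathbf 1-S'$ one gets $T=T^*$ and $\Gamma T\Gamma=-T$, hence $\Gamma T=-T\Gamma$. First I would record, by the pairing argument used in the proof of Proposition 3.7 (the eigenspaces of $T$ for $t$ and for $-t$ are interchanged by $\Gamma$), that $\mathrm{rank}(T)$ is even whenever it is finite; so once $\mathrm{rank}(T)\ge4$ is excluded, $T\neq0$ will force $\dim\mathrm{Im}(T)=2$. Next introduce the bilinear form $B(\xi,\eta):=(T\eta,\Gamma\xi)$ on $\mathcal H$. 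Using $\Gamma T=-T\Gamma$ and $T=T^*$ one checks that $B$ is bilinear and alternating, that $D_{ij}=B(\xi_i,\xi_j)$, and that the radical of $B$ is exactly $\ker T$ (indeed $\{\xi:B(\xi,\cdot)=0\}=\{\xi:\Gamma\xi\perp\mathrm{Im}\,T\}=\Gamma(\ker T^*)=\ker T$). Hence $B$ induces a nondegenerate alternating form $\bar B$ on $\mathcal H/\ker T$, whose dimension equals $\mathrm{codim}\,\ker T\ge\dim\mathrm{Im}(T)$.

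Now suppose $\dim\mathrm{Im}(T)\ge4$, so $\dim(\mathcal H/\ker T)\ge4$. I would pick classes $[\xi_3],[\xi_4]$ with $\bar B([\xi_3],[\xi_4])\neq0$, pass to their $\bar B$-orthogonal complement—again nondegenerate and of dimension $\ge2$, since the plane split off is finite dimensional—and pick $[\xi_1],[\xi_2]$ there with $\bar B([\xi_1],[\xi_2])\neq0$. Choosing arbitrary representatives $\xi_1,\dots,\xi_4\in\mathcal H$ then gives $D_{12},D_{34}\neq0$ and $D_{13}=D_{14}=D_{23}=D_{24}=0$, so $\mathrm{Pf}(D)=D_{12}D_{34}\neq0$, contradicting the conclusion of the first paragraph. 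Hence $\mathrm{rank}(T)\le3$, and combined with evenness (ruling out $1$ and $3$) and $T\neq0$ we obtain $\dim\mathrm{Im}(S-S')=2$. This is consistent with the commutative case (Theorem 3.4), where the affine property already forces the rank to be $0$ or $2$.

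The step I expect to be the real obstacle is this last one in infinite dimensions, where $\mathrm{Im}(T)$ need not be closed: one has to be certain that $\mathcal H/\ker T$ genuinely carries a nondegenerate alternating form of dimension $\ge4$, and that two hyperbolic planes can be split off. Both hold—$\ker T$ is precisely the radical of $B$, and removing a finite-dimensional nondegenerate subspace from a (possibly infinite-dimensional) nondegenerate alternating space leaves a nondegenerate orthogonal complement—but this is the part that should be written out carefully rather than asserted. Everything else is the bookkeeping check that the prefactor and the three elements of $T_4$ in Definition 2.1 really yield $m_{12}m_{34}-m_{13}m_{24}+m_{14}m_{23}$.
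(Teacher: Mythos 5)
You should first note that the paper contains no proof of this statement: it is exactly Conjecture 3.9, supported only by the commutative finite-dimensional case (Theorem 3.4) and Example 3.8. So there is nothing to compare against, and what you have written appears to be a complete and correct proof of the conjecture, obtained by pushing the paper's own Theorem 3.1 technique (degree counting in $\lambda$) to its full strength. In Theorem 3.1 the test element is $b(\xi)b(\eta)b(\zeta)^2$, which is weak because $b(\zeta)^2=\tfrac12(\zeta,\Gamma\zeta)\mathbf 1$ is a scalar; indeed your antisymmetry computation shows $(S\xi,\Gamma\xi)=(S'\xi,\Gamma\xi)$ holds automatically for \emph{every} pair of covariance operators. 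Using a general $b(\xi_1)b(\xi_2)b(\xi_3)b(\xi_4)$ instead extracts the genuine obstruction: I checked that with the paper's conventions ($T_4$, the prefactor $(-1)^{n(n-1)/2}$, and $\phi_S(b(\xi_i)b(\xi_j))=(S\xi_j,\Gamma\xi_i)$) the $n=2$ Wick formula is indeed $m_{12}m_{34}-m_{13}m_{24}+m_{14}m_{23}$, so the coefficient of $\lambda^2$ in $\phi_{\lambda S+(1-\lambda)S'}(b(\xi_1)b(\xi_2)b(\xi_3)b(\xi_4))$ is $\mathrm{Pf}(D)$ with $D_{ij}=((S-S')\xi_j,\Gamma\xi_i)$, and the affine property (equality for all $\lambda\in[0,1]$, hence of polynomials) forces $\mathrm{Pf}(D)=0$ for all quadruples. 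Your linear-algebra half also checks out: $T=S-S'$ satisfies $T=T^*$ and $\Gamma T=-T\Gamma$, so $B(\xi,\eta)=(T\eta,\Gamma\xi)$ is bilinear and antisymmetric, its radical is exactly $\ker T$ (so $\bar B$ is nondegenerate on $\mathcal H/\ker T$, which is algebraically isomorphic to $\mathrm{Im}\,T$ even when the range is not closed), and if $\dim\mathrm{Im}(T)\geq 4$ the standard splitting of a nondegenerate finite-dimensional plane produces $\xi_1,\dots,\xi_4$ with $\mathrm{Pf}(D)=D_{12}D_{34}\neq 0$, a contradiction; evenness of the rank (your $\Gamma$-pairing of eigenspaces, as in Proposition 3.7, or simply the evenness of the rank of a nondegenerate alternating form) then rules out ranks $1$ and $3$, giving $\dim\mathrm{Im}(S-S')=2$ when $S\neq S'$. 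The two places you flagged as needing to be written out --- the explicit $n=2$ Wick/Pfaffian computation with the paper's signs, and the symplectic splitting lemma in possibly infinite dimensions --- are exactly the right ones, and both go through; once they are written out in full, this settles the conjecture affirmatively rather than merely reproving something in the paper.
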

\begin{rema}
Finally, we consider the affine property for quasi-free states on a self-dual CCR algebra treated in {\rm\cite{arak-shir}} and {\rm\cite{arak2}}. In this case, a quasi-free state is defined as in Definition 2.1 without signature. Moreover, there also exists a one-to-one correspondence between quasi-free states and hermitian forms satisfying some properties. By similar computations as those used in the proof of Theorem 3.1, we can prove that the correspondence is affine if and only if two hermitian forms coincide.
\end{rema}

\section*{Acknowledgment}
I am grateful to Professor Shigeru Yamagami for helpful comments.

\ 

\ 

\ 

\ 

\ 

\ 

\ 

\ 

\ 

\noindent
Yusuke Sawada\\
Graduate School of Mathematics,\\
Nagoya University,\\
Furocho, Chikusa-ku, Nagoya, 464-8602, Japan.\\
e-mail: m14017c@math.nagoya-u.ac.jp
\end{document}